\newcommand{\Z}{\mathbb{Z}}
\newcommand{\N}{\mathbb{N}}
\newcommand{\Q}{\mathbb{Q}}
\newtheorem{thm}{Theorem}
\newtheorem{lma}{Lemma}
\theoremstyle{definition}
\newtheorem{dfn}{Definition}
\newtheorem{rmk}{\sc Remark}
\newcommand\T{\rule{0pt}{2.6ex}}
\newcommand\TT{\rule{0pt}{3.7ex}}
\newcommand\B{\rule[-1.2ex]{0pt}{0pt}}
\title[Euclidean Ideals]{Two Classes of Number Fields with a Non-Principal Euclidean Ideal}
\author{Catherine Hsu}
\thanks{The author's research is partially supported by a GAANN Fellowship}
\begin{document}
\maketitle

\begin{abstract} This paper introduces two classes of totally real quartic number fields, one of biquadratic extensions and one of cyclic extensions, each of which has a non-principal Euclidean ideal. It generalizes  techniques of Graves used to prove that the number field $\Q(\sqrt{2},\sqrt{35})$ has a non-principal Euclidean ideal. 
\end{abstract}
\section{Introduction}
\subsection{Notation} Given a number field $K,$ we will denote its ring of integers $\mathcal{O}_K,$ its class group $\mathrm{Cl}_K,$ its class number $h_K,$ and its conductor $f(K)$. When $K/\Q$ is abelian, we will denote its Hilbert class field over $\Q$ by $H/\Q$. In this situation, for a rational prime $p\in\Z$, we will use $\mathfrak{p}$ to denote a prime in $\mathcal{O}_K$ lying over $(p)$  and $\mathfrak{P}$ to denote a prime in $\mathcal{O}_H$ lying over $\mathfrak{p}$.
\subsection{Background and Main Results} In 1979, Lenstra \cite{lenstra1979euclidean} defined the Euclidean ideal, a generalization of the Euclidean algorithm:
\begin{dfn} Suppose $R$ is a Dedekind domain and that $E$ is the set of fractional ideals that contain $R$. If $C$ is an ideal of $R,$ it is \textit{Euclidean} if there exists a function $\psi:E\to\,W,$ $W$ a well-ordered set, such that for all $I\in E$ and all $x\in IC\,\backslash C,$ there exists some $y\in C$ such that \[\psi((x+y)^{-1}IC)<\psi(I).\]
We say $\psi$ is a Euclidean algorithm for $C$ and $C$ is a Euclidean ideal.
\end{dfn}
While the existence of a Euclidean ideal $C$ in a ring of integers $\mathcal{O}_K$ does not give a method for computing greatest common divisors as a Euclidean algorithm would, it does guarantee that the class group has a certain structure, namely $\mathrm{Cl}_K = \langle [C]\rangle$.

One method that can be used in certain situations to produce a Euclidean algorithm for an ideal $C$ is a Motzkin-type construction for ideals \cite{graves2013growth}. As its name suggests, this method is a generalization of Motzkin's construction \cite{motzkin1949euclidean}, which can be used to find a Euclidean algorithm on certain integral domains. Before we can give an explicit example of such a Euclidean algorithm for $C,$ we need to recall the definition from \cite{graves2013growth} of a Motzkin-type construction for ideals:

\begin{dfn} Given a non-zero ideal $C$ in $R,$ we define
\[\begin{split}
&A_{0,C}:=\{R\}
\\&A_{i,C}:=A_{i-1,C}\cup\left\{I\in E\;\vline\; \begin{split}\forall x\in IC\,\backslash C&,\, \exists y\in C \text{ such }\\\text{ that } (x+y)^{-1}&IC\in A_{i-1,C} \end{split}\right\} \text{ for } i>0,
\\&\text{and } A_C:=\bigcup_{i = 0}^\infty A_{i,C}.
\end{split}\]
Note that the $A_{i,C}$'s are nested sets.
\end{dfn}
Given this definition, Graves [2] proves that if $A_C=E,$ where $E$ is the set fractional ideals containing $R,$ we can construct a Euclidean algorithm for $C$ via the function $\psi_C:E\to\,\N$ defined by \[\psi_C(I) = i, \;\;\;\;\text{ if } I\in A_{i,C}\,\backslash A_{i-1,C}.\]
Indeed, if $I$ is an ideal in $E$ and $x\in IC\,\backslash\, C,$ then since $I\in A_C,$ there exists some $y\in C$ such that $(x+y)^{-1}IC$ is an element of $A_{\psi_C(I)-1,C},$ and hence, for this $y\in C,$  \[\psi_C((x+y)^{-1}IC)\leq \psi_C(I)-1<\psi_C(I).\]

Now, just as the existence of a Euclidean algorithm for the ring of integers $\mathcal{O}_K$ in a number field $K$ implies a trivial class group, the existence of a Euclidean ideal $C$ in $\mathcal{O}_K$ implies a cyclic class group with generator $[C]$. In fact, Lenstra proves a much stronger result:

\begin{thm}\label{lenstra} \cite{lenstra1979euclidean} (Lenstra, 1979) Suppose $K$ is a number field, $|\mathcal{O}_K^\times|=\infty,$ and $C$ is an ideal of $\mathcal{O}_K$. If one assumes the Generalized Riemann Hypothesis, then $C$ is a Euclidean ideal if and only if $\mathrm{Cl}_K =\langle[C]\rangle$.
\end{thm}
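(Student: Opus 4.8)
The plan is to treat the two implications separately; only the ``$\Leftarrow$'' direction will use GRH, while the converse is elementary.

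\emph{Necessity.} Suppose $C$ is Euclidean with algorithm $\psi\colon E\to W$. I would imitate the proof that a Euclidean domain is a PID. First note every ideal class has a representative in $E$: if $\mathfrak a\subseteq\mathcal{O}_K$ is integral then $\mathfrak a^{-1}\in E$, and $[\mathfrak a^{-1}]=[\mathfrak a]^{-1}$ ranges over $\mathrm{Cl}_K$. Let $S=\{I\in E:[I]\notin\langle[C]\rangle\}$ and suppose $S\neq\emptyset$; pick $I\in S$ with $\psi(I)$ minimal. Since $[\mathcal{O}_K]=1\in\langle[C]\rangle$ we have $I\neq\mathcal{O}_K$, hence $IC\supsetneq C$, so there is $x\in IC\setminus C$. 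The Euclidean property gives $y\in C$ with $\psi\big((x+y)^{-1}IC\big)<\psi(I)$; since $x+y\in IC$ one has $(x+y)^{-1}IC\in E$, so by minimality it lies outside $S$, whence $[I][C]=[(x+y)^{-1}IC]\in\langle[C]\rangle$ and therefore $[I]\in\langle[C]\rangle$, contradicting $I\in S$. Thus $S=\emptyset$ and $\mathrm{Cl}_K=\langle[C]\rangle$. This half needs neither GRH nor the hypothesis on units.

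\emph{Sufficiency.} This is the hard half, the natural generalization of Weinberger's theorem that, under GRH, a ring of $S$-integers of a number field with infinitely many units and trivial class group is Euclidean. Given $\mathrm{Cl}_K=\langle[C]\rangle$, I would build the algorithm via the Motzkin-type construction recalled above, the goal being $A_C=E$. First I would unwind the low levels: $\mathcal{O}_K\in A_{0,C}$, and an $I\in E$ with $IC$ principal enters $A_{1,C}$ precisely when every residue in $IC\setminus C$ can be translated by an element of $C$ onto a generator of $IC$; more generally, passing from $I$ to $(x+y)^{-1}IC$ multiplies the ideal class by $[C]$, so the construction is a descent on the exponent $k$ with $[I]=[C]^{-k}$, and here cyclicity of $\mathrm{Cl}_K$ is exactly what guarantees every class is eventually within reach. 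The combinatorial heart is then Motzkin's ``second-smallest element'' argument: if $E\setminus A_C\neq\emptyset$, choose within a fixed class a member $I$ minimal in an auxiliary sense, write $IC$ as a generator times a smaller ideal already handled, and adjust the residue by a unit — this succeeds provided a large enough supply of prime ideals $\mathfrak p$ has the image of $\mathcal{O}_K^\times$ generating $(\mathcal{O}_K/\mathfrak p)^\times$, or a subgroup of controlled index.

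\emph{Where GRH enters, and the main obstacle.} Precisely this last point is the crux, and is the analog of Artin's primitive-root problem. To show that enough primes $\mathfrak p$ have $\mathcal{O}_K^\times$ surjecting onto $(\mathcal{O}_K/\mathfrak p)^\times$, I would encode the failure of a fixed unit $\varepsilon$ to be a ``primitive root'' modulo $\mathfrak p$ as a splitting condition in the Kummer extensions $K(\zeta_\ell,\varepsilon^{1/\ell})$, sieve over $\ell$, and estimate the resulting prime counts by the effective Chebotarev density theorem. The unconditional error term is too weak to beat the $\gg\log$ many moduli $\ell$ one must discard, whereas its GRH-conditional form is just strong enough; this is the step that forces the hypothesis and is the principal obstacle. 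The standing assumption $|\mathcal{O}_K^\times|=\infty$ is what makes this possible at all, supplying a positive-rank family of units for the Kummer extensions — without it (e.g.\ over imaginary quadratic fields) the construction genuinely fails. Finally, Lenstra's real addition to Weinberger is the bookkeeping that tracks the class $[C]^{-k}$ of each ideal through the descent, so that triviality of $\mathrm{Cl}_K$ relaxes to cyclicity; assembling these pieces produces the algorithm $\psi_C$ and finishes the proof.
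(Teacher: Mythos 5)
The paper does not prove this statement at all: it is Theorem~\ref{lenstra}, quoted from Lenstra's 1979 paper \cite{lenstra1979euclidean} and used as a black box, so there is no in-paper proof to compare against. Judged on its own terms, your \emph{necessity} half is complete and correct: the well-ordering of $W$ lets you pick $I\in S$ with $\psi(I)$ minimal, the invertibility of $C$ in a Dedekind domain gives $IC\supsetneq C$ for $I\neq\mathcal{O}_K$, and the identity $[(x+y)^{-1}IC]=[I][C]$ closes the descent. This direction indeed needs neither GRH nor infinitely many units, which matches how the paper uses the easy implication implicitly (``the existence of a Euclidean ideal $C$ \ldots implies a cyclic class group with generator $[C]$'').

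The \emph{sufficiency} half, however, is a roadmap rather than a proof, and the gaps are exactly at the two load-bearing points. First, the Motzkin-type descent: you assert that ``Motzkin's second-smallest element argument'' succeeds ``provided a large enough supply of prime ideals $\mathfrak p$'' has $\mathcal{O}_K^\times\twoheadrightarrow(\mathcal{O}_K/\mathfrak p)^\times$, but you never show how such primes force $A_C=E$ --- in particular you need the primes to lie in the correct ideal class (so that $[\mathfrak p]=[C]$, as in Theorem~\ref{growthresult}), you need to reduce arbitrary ideals in $E$ to the prime case, and you need to verify that surjectivity of the unit image onto $(\mathcal{O}_K/\mathfrak p)^\times$ actually lets you translate every residue $x\in IC\setminus C$ into something already in a lower $A_{i,C}$; none of this is carried out. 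Second, the analytic input: ``encode the failure as a splitting condition in $K(\zeta_\ell,\varepsilon^{1/\ell})$, sieve over $\ell$, and apply effective Chebotarev under GRH'' is the correct Weinberger/Lenstra strategy, but as written it is a description of a proof, not a proof; the inclusion--exclusion over $\ell$, the control of the degrees and discriminants of the Kummer extensions, and the verification that the GRH error term is summable are precisely where all the work lies. So the proposal correctly identifies the architecture of Lenstra's argument and correctly locates where GRH and $|\mathcal{O}_K^\times|=\infty$ enter, but the hard direction is not established.
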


By generalizing the work of Harper and Murty \cite{mharper1,harper2004euclidean} on Euclidean rings to the Euclidean ideal case, Graves \cite{graves2013growth} proves that under certain conditions,  Theorem \ref{lenstra} holds without assuming the Generalized Riemann Hypothesis:

\begin{thm}\label{growthresult}\cite{graves2013growth} (Graves, 2013) If $K$ is a number field such that $|\mathcal{O}_K^\times|=\infty,$ if $[C]$ generates $\mathrm{Cl}_K,$ and if  \[\left|\left\{\begin{split}\text{prime }&\text{ideals }\\\mathfrak{p}\subset&\,\mathcal{O}_K\end{split}\,\vline\,\,\mathrm{Nm}(\mathfrak{p})\leq x,\,[\mathfrak{p}]=[C],\,\mathcal{O}_K^\times\twoheadrightarrow(\mathcal{O}_K/\mathfrak{p})^\times\right\}\right|\gg\frac{x}{\log^2x},\] then $C$ is a Euclidean ideal.
\end{thm}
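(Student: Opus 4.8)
The plan is to produce the Euclidean algorithm for $C$ by showing that the Motzkin-type construction saturates, that is, that $A_C=E$; by the discussion preceding the statement this is exactly what is needed. The argument is modeled on the method Harper and Murty \cite{mharper1,harper2004euclidean} developed for Euclidean \emph{rings} of integers, transplanted to the Euclidean \emph{ideal} setting.

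The first step is bookkeeping. Every $I\in E$ has the form $I=\mathfrak a^{-1}$ for a unique integral ideal $\mathfrak a$, and since $[C]$ generates $\mathrm{Cl}_K$, the product $\mathfrak a^{-1}C^{\,j}$ is principal for a suitable $j$. Unwinding the definition of $A_{i,C}$: for $x\in IC\setminus C$ and $y\in C$ one has $x+y\neq 0$ and $(x+y)=IC\,\mathfrak b$ for an integral ``cofactor'' ideal $\mathfrak b$, with $(x+y)^{-1}IC=\mathfrak b^{-1}$; hence $I\in A_{i,C}$ exactly when, for every such $x$, the coset $x+C$ contains an element whose cofactor $\mathfrak b$ satisfies $\mathfrak b^{-1}\in A_{i-1,C}$. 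The base of the induction is the computation that if $\mathfrak p$ is a prime with $[\mathfrak p]=[C]$ and $\mathcal{O}_K^\times\twoheadrightarrow(\mathcal{O}_K/\mathfrak p)^\times$, then $\mathfrak p^{-1}\in A_{1,C}$: here $IC=\mathfrak p^{-1}C$ is principal with some generator $\gamma$, a general $x$ equals $\gamma t$ with $t\notin\mathfrak p$, and the surjectivity of units produces a unit $u$ with $u\equiv t\pmod{\mathfrak p}$, so that $u\gamma\in x+C$ generates $IC$.

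The main work is the inductive step, which I would run as an induction on the norm of $\mathfrak a$ together with an auxiliary parameter. Given $I=\mathfrak a^{-1}$ and $x\in IC\setminus C$, the aim is to choose $y\in C$ so that the cofactor $\mathfrak b$ of $x+y$ is \emph{either} of strictly smaller norm than $\mathfrak a$ \emph{or} a prime ideal already accounted for at an earlier stage; since $[C]$ generates $\mathrm{Cl}_K$, the class $[\mathfrak b]$ that principality forces can, over successive reductions, be steered to the trivial class, which is what makes the induction terminate. Forcing $x+y$ to be an honest generator of $IC\,\mathfrak b$ that also lies in the prescribed coset $x+C$ is again where the unit-surjectivity hypothesis is used, namely to adjust a generator by a suitable unit.

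The crux, and the step I expect to be the main obstacle, is the analytic input guaranteeing that a usable cofactor prime actually exists. There are on the order of $\mathrm{Nm}(\mathfrak a)$ residue classes modulo $\mathfrak a$ to reckon with and one must land in the single class dictated by $x$, so a bare lower bound for the number of ``good'' primes --- those possessing the surjectivity property and lying in the required ideal class --- is not enough on its own. Instead one feeds the hypothesis that this count is $\gg x/\log^2 x$ into a Cauchy--Schwarz/large-sieve estimate to show that the good primes are equidistributed among residue classes well enough that the required class is hit once $\mathrm{Nm}(\mathfrak a)$ is dwarfed by the range of primes available. Making this estimate uniform in $\mathfrak a$, so that the induction genuinely closes, while simultaneously keeping track of the ideal-class and unit bookkeeping, is the delicate part; everything else is a mechanical unwinding of the Motzkin construction. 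Since the large sieve is unconditional, no appeal to the Generalized Riemann Hypothesis is made, which is the whole point of the statement.
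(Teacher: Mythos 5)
You should first be aware that the paper does not prove this statement: it is Theorem~\ref{growthresult}, imported verbatim from Graves \cite{graves2013growth}, and the only related material the paper supplies is the Motzkin-type construction and the observation that $A_C=E$ yields a Euclidean algorithm $\psi_C$. So there is no in-paper proof to match yours against. Judged on its own terms, your skeleton is the right one --- it is indeed the Harper--Murty strategy transplanted to ideals --- and your base case is correct and complete: for $\mathfrak p$ with $[\mathfrak p]=[C]$ and $\mathcal{O}_K^\times\twoheadrightarrow(\mathcal{O}_K/\mathfrak p)^\times$, writing $\mathfrak p^{-1}C=(\gamma)$ and adjusting $x=\gamma t$ by a unit congruent to $t$ modulo $\mathfrak p$ does put $\mathfrak p^{-1}$ in $A_{1,C}$.

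The gap is that the entire analytic content of the theorem is the step you flag as ``the delicate part'' and then leave unproved, and the one concrete mechanism you propose for it does not work. A lower bound $\gg x/\log^2 x$ on the number of good primes is only a cardinality statement; it is perfectly compatible with those primes being badly distributed, or even entirely absent, in most residue classes modulo a given $\mathfrak a$, so no large-sieve or Cauchy--Schwarz argument can extract from it the equidistribution you invoke to ``hit the single class dictated by $x$.'' The actual argument in \cite{graves2013growth} (following \cite{harper2004euclidean,mharper1}) does not try to equidistribute the primes at all: it bootstraps the \emph{density of the Motzkin sets themselves}, showing via a Cauchy--Schwarz count of representations that the ideals built from products of two good primes lying in $A_{2,C}$ number $\gg x/\log x$, that one more iteration yields a positive proportion of all ideals in the next Motzkin set, and only then concluding that every residue class modulo every $\mathfrak a$ contains a usable element. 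That cascade of growth lemmas, not equidistribution of the good primes, is the engine of the proof, and it is absent from your proposal. As written, your argument establishes $A_{1,C}\supseteq\{\mathfrak p^{-1}\}$ for the good primes and nothing beyond that.
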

In order to use the growth result given in  Theorem \ref{growthresult} to find explicit examples of number fields with a Euclidean ideal, we require  the following theorem which Graves stated in \cite{graves2011has} by utilizing results from \cite{heathbrown,narkiewicz1988units}:

\begin{thm}\label{growthresult2} If $K$ is a totally real number field with conductor $f(K),$ if $\{e_1,e_2,e_3\}$ is a multiplicatively independent set contained in $\mathcal{O}_K^\times,$ if $\ell =\mathrm{lcm}(16,f(K)),$ and if $(u,\ell) = \left(\frac{u-1}{2},\ell\right)=1,$ then \[\left|\left\{\begin{split}\text{prime }&\text{ideals }\mathfrak{p}\\\text{of first}&\text{ degree}\end{split}\,\vline\,\begin{split}\mathrm{Nm}(\mathfrak{p})&\equiv u\pmod \ell,\\\mathrm{Nm}(\mathfrak{p})\leq x, &\langle-1,e_i\rangle\twoheadrightarrow(\mathcal{O}/\mathfrak{p})^\times\end{split}\right\}\right|\gg\frac{x}{\log^2x},\] for at least one $i$.
\end{thm}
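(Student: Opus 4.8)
The plan is to translate the count of first-degree prime ideals into a count of rational primes with a prescribed splitting behaviour in $K$, and then to apply the sieve argument that Heath-Brown developed for Artin's conjecture for primitive roots \cite{heathbrown}, in the form extended to algebraic units by Narkiewicz \cite{narkiewicz1988units}.

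First I would unwind the generation condition. If $\mathfrak p$ is a first-degree prime with $\mathrm{Nm}(\mathfrak p)=p$, then (as $K/\Q$ is abelian) $p$ splits completely in $K$, so $(\mathcal O_K/\mathfrak p)^\times\cong\F_p^\times$ and there are exactly $[K:\Q]$ such $\mathfrak p$ above each admissible $p$; hence the cardinality in question equals, up to a bounded factor, the number of rational primes $p\le x$ with $p\equiv u\pmod\ell$, splitting completely in $K$, for which the image of $e_i$ in $\F_p^\times$ together with $-1$ generates $\F_p^\times$. Because $16\mid\ell$ and $(\tfrac{u-1}{2},\ell)=1$ force $u\equiv 3\pmod 4$, the element $-1$ is a non-residue modulo such $p$, so $\langle-1,e_i\rangle=\F_p^\times$ is equivalent to $[\F_p^\times:\langle e_i\bmod\mathfrak p\rangle]\le 2$, i.e.\ to $e_i$ being a $q$-th power modulo $\mathfrak p$ for no odd prime $q\mid p-1$ (the $-1$ serving precisely to absorb the potential obstruction at $q=2$). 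The same hypotheses also show every odd prime $q\mid p-1$ is coprime to $\ell$: if $q\mid\ell$ and $q\mid p-1$ then $q\mid u-1$, hence $q\mid\tfrac{u-1}{2}$, contradicting $(\tfrac{u-1}{2},\ell)=1$.

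Next I would set up, for each odd prime $q$, the Kummer extension $L_{i,q}:=K(\zeta_q,e_i^{1/q})$, and use the standard equivalence that $q\mid p-1$ and $e_i\in(\F_p^\times)^q$ hold together exactly when $p$ splits completely in $L_{i,q}$. Thus the primes to be discarded are those $p$ for which, for every $i\in\{1,2,3\}$, there is an odd prime $q$ with $p$ split completely in $L_{i,q}$. The conditions ``$p$ splits completely in $K$'' and ``$p\equiv u\pmod\ell$'' are Chebotarev conditions inside $\Q(\zeta_\ell)$, and by the coprimality just established (and by multiplicative independence of $\{e_1,e_2,e_3\}$, which forces $[L_{i,q}:K]=q(q-1)$ for all $q$ outside a fixed finite set, since each $e_i$ is non-torsion) they are linearly disjoint from the relevant compositum of the $L_{i,q}$; imposing them therefore only rescales each density by a positive constant and leaves the order of magnitude unchanged.

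With this dictionary in place, the estimate becomes Heath-Brown's: run the lower-bound sieve of \cite{heathbrown} over $K$, separating the obstructing primes $q$ into a short range, where $\sum_q [L_{i,q}:\Q]^{-1}$ converges and contributes negligibly after a Brun--Titchmarsh bound taken in the progression modulo $\ell$, and a long range, where multiplicative independence of the three units drives the combinatorial pivoting that prevents a simultaneous obstruction for all of $e_1,e_2,e_3$ on a set of primes of size comparable to the main term; the survivors number $\gg x/\log^2 x$. I expect the main obstacle to be exactly this sieve core, and in particular checking that the Brun--Titchmarsh and linear disjointness inputs remain uniform once the fixed modulus $\ell$ and the ground field $K$ are carried along --- this being the content drawn from \cite{narkiewicz1988units} --- together with the implicit compatibility that primes $p\equiv u\pmod\ell$ splitting completely in $K$ exist at all (so that $u$ lies in the correct coset modulo $f(K)$); granted that, the remainder is bookkeeping.
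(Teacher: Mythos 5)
The first thing to say is that the paper does not prove Theorem \ref{growthresult2} at all: it is imported from Graves \cite{graves2011has}, who in turn assembles it from Heath-Brown \cite{heathbrown} and Narkiewicz \cite{narkiewicz1988units}, so there is no in-paper argument to compare yours against. Measured against the actual sources, your reductions are the right ones and follow the standard Gupta--Murty/Heath-Brown/Harper--Murty line: the observation that $16\mid\ell$ together with $\left(\tfrac{u-1}{2},\ell\right)=1$ forces $u\equiv 3\pmod 4$, so that $-1$ is a nonresidue and absorbs the obstruction at $2$, turning the surjectivity condition into $[\F_p^\times:\langle e_i\bmod\mathfrak p\rangle]\le 2$; the remark that every odd obstructing prime $q\mid p-1$ is automatically coprime to $\ell$ (this is precisely what the hypothesis on $\tfrac{u-1}{2}$ is for, and what keeps the congruence condition linearly disjoint from the Kummer extensions $K(\zeta_q,e_i^{1/q})$); and the final appeal to the three-unit lower-bound sieve. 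You also correctly flag the one genuine imprecision in the statement as quoted: nothing requires $u\bmod f(K)$ to lie in the subgroup of $(\Z/f(K)\Z)^\times$ cut out by $K$, and if it does not, the set being counted is finite and the asserted lower bound fails; in this paper the theorem is only ever applied with $u$ congruent to a prime $s$ chosen to split in $K$, which repairs this.

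That said, your argument is a reduction rather than a proof: the entire analytic content --- the lower-bound sieve over short and long ranges of $q$, Brun--Titchmarsh in the progression modulo $\ell$, and the divisor/lattice-point argument that uses multiplicative independence of $e_1,e_2,e_3$ to rule out simultaneous obstruction --- is deferred to \cite{heathbrown,narkiewicz1988units}, exactly as the paper defers the whole theorem. Given that the paper treats the result as a black box, this level of detail is acceptable, but it should be framed as a derivation of the stated form from the cited results, not as an independent proof.
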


In \cite{graves2011has}, Graves uses Theorems \ref{growthresult} and \ref{growthresult2} to prove that $\Q(\sqrt{2},\sqrt{35})$ has a non-principal Euclidean ideal.  In this paper, we generalize Graves' work to prove our first main result that a certain class of biquadratic number fields have a non-principal Euclidean ideal. The first main result is:
\begin{thm}\label{newclass}If a number field $K$ is of the form $\mathbb{Q}(\sqrt{q},\sqrt{k\cdot r}),$ where  $q,\,k,\, r\geq 29$ are distinct rational primes satisfying $q,\,k,\,r\equiv 1\pmod 4,$ and if $h_K=2,$ then $K$ has a non-principal Euclidean ideal.
\end{thm}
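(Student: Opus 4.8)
The plan is to verify the hypotheses of Theorem~\ref{growthresult} for a non-principal ideal $C\subset\mathcal{O}_K$, using Theorem~\ref{growthresult2} to supply the required prime count; this follows Graves' treatment of $\Q(\sqrt 2,\sqrt{35})$. Since $q,k,r$ are positive primes congruent to $1\pmod 4$, the field $K=\Q(\sqrt q,\sqrt{kr})$ is a totally real biquadratic quartic field, so $\mathcal{O}_K^\times$ has rank $3$ and in particular is infinite; fix a multiplicatively independent triple $\{e_1,e_2,e_3\}\subset\mathcal{O}_K^\times$, for instance a fundamental system of units. Because $h_K=2$, the group $\mathrm{Cl}_K$ is cyclic of order $2$ and $[C]$ generates it, so by Theorem~\ref{growthresult} it suffices to exhibit $\gg x/\log^2 x$ first-degree primes $\mathfrak p\subset\mathcal{O}_K$ with $\mathrm{Nm}(\mathfrak p)\le x$, $[\mathfrak p]=[C]$, and $\mathcal{O}_K^\times\twoheadrightarrow(\mathcal{O}_K/\mathfrak p)^\times$.

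The key step is to convert the condition $[\mathfrak p]=[C]$ into a congruence on $\mathrm{Nm}(\mathfrak p)$. Using genus theory I would first check that $\Q(\sqrt q,\sqrt k,\sqrt r)/K$ is unramified everywhere: each of $q,k,r$ is tamely ramified with ramification index $2$ in both $K$ and $\Q(\sqrt q,\sqrt k,\sqrt r)$, the remaining finite primes (including $2$) are unramified, and all infinite places are real. As this extension has degree $2=h_K$, it must be the Hilbert class field $H$ of $K$; it is abelian over $\Q$ with $\mathrm{Gal}(H/\Q)\cong(\Z/2\Z)^3$ and $\mathrm{Gal}(H/K)=\langle\tau\rangle$, where $\tau$ negates $\sqrt k$ and $\sqrt r$ and fixes $\sqrt q$. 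A Chebotarev/Artin-symbol computation then shows that a rational prime $p\nmid 2qkr$ admits a first-degree prime $\mathfrak p$ in $K$ with $[\mathfrak p]\neq 1$ exactly when $\left(\tfrac q p\right)=1$ and $\left(\tfrac k p\right)=\left(\tfrac r p\right)=-1$, and that in this case $p$ splits completely in $K$ and \emph{every} prime of $K$ above $p$ is first-degree and non-principal, so the class of a prime over $p$ is unambiguous. Since $q,k,r\equiv 1\pmod 4$, quadratic reciprocity rewrites these three Legendre conditions as a union of residue classes for $p$ modulo $qkr$.

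It remains to feed this into Theorem~\ref{growthresult2}. I would compute $f(K)=qkr$ as the least common multiple of the conductors $q$, $kr$, $qkr$ of the three quadratic subfields, so $\ell=\mathrm{lcm}(16,f(K))=16\,qkr$, and then choose $u$ modulo $\ell$ lying in the residue classes modulo $q$, $k$, $r$ that encode $\left(\tfrac q p\right)=1$, $\left(\tfrac k p\right)=\left(\tfrac r p\right)=-1$ and satisfying $u\equiv 3\pmod{16}$. Using $q,k,r\ge 29$ one can moreover arrange $u\not\equiv 0,1$ modulo each of $q,k,r$, which together with $u\equiv 3\pmod{16}$ gives $(u,\ell)=\left(\tfrac{u-1}{2},\ell\right)=1$; this is where the hypothesis $q,k,r\ge 29$ is used. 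Theorem~\ref{growthresult2} then yields, for some $i$, at least $\gg x/\log^2 x$ first-degree primes $\mathfrak p$ with $\mathrm{Nm}(\mathfrak p)\le x$, $\mathrm{Nm}(\mathfrak p)\equiv u\pmod\ell$, and $\langle-1,e_i\rangle\twoheadrightarrow(\mathcal{O}_K/\mathfrak p)^\times$. For every such $\mathfrak p$ the congruence $\mathrm{Nm}(\mathfrak p)\equiv u\pmod{qkr}$ forces the Legendre conditions, hence $[\mathfrak p]=[C]$, while $\langle-1,e_i\rangle\subseteq\mathcal{O}_K^\times$ gives $\mathcal{O}_K^\times\twoheadrightarrow(\mathcal{O}_K/\mathfrak p)^\times$; so Theorem~\ref{growthresult} applies and $C$ is a non-principal Euclidean ideal.

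I expect the main obstacle to be the second paragraph: pinning down $H=\Q(\sqrt q,\sqrt k,\sqrt r)$ (one must confirm that $h_K=2$ genuinely forces this particular unramified quadratic extension) and extracting from it the precise dictionary between the class of a first-degree prime and a congruence, including the point that all first-degree primes over a fixed $p$ share a class — this is what makes the output of Theorem~\ref{growthresult2}, which constrains only $\mathrm{Nm}(\mathfrak p)$ and not $\mathfrak p$ itself, actually supported on the non-principal class. The existence of an admissible $u$ is then an elementary counting exercise in $(\Z/q\Z)^\times\times(\Z/k\Z)^\times\times(\Z/r\Z)^\times$.
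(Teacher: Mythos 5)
Your proposal is correct and follows the same strategy as the paper: establish $f(K)=qkr$ and identify $H=\Q(\sqrt q,\sqrt k,\sqrt r)$ as the Hilbert class field by checking $H/K$ is everywhere unramified, translate non-principality of a first-degree prime $\mathfrak p$ over $p$ into the splitting condition ``$p$ splits completely in $K$ but not in $H$'' (equivalently a congruence on $p$ mod $qkr$, since $H/\Q$ is abelian), and then feed a suitable residue class $u$ into Theorem~\ref{growthresult2} to meet the hypotheses of Theorem~\ref{growthresult}. The one place you genuinely diverge is the selection of $u$: the paper runs a Chebotarev/Dirichlet density comparison (density of $T_K\setminus T_H$ is $1/8$, which exceeds the density $\leq 3/28$ of primes $\equiv 1$ mod $q$, $k$, or $r$ --- this is where $q,k,r\geq 29$ enters for the paper) to produce an actual prime $s$ and then adjusts it mod $4$, whereas you build $u$ directly by CRT from admissible residue classes mod $q$, $k$, $r$, and $16$. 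Your version is more elementary and, as you half-suspect, barely uses the bound $q,k,r\geq 29$ at all (one only needs a quadratic residue mod $q$ other than $0,1$); both constructions are valid, and your observation that all primes of $K$ above a fixed $p$ share an ideal class (because the Artin symbol in the abelian extension $H/\Q$ depends only on $p$) is exactly the point the paper relies on in its Condition~(3).
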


Then by slightly modifying the techniques used to prove Theorem \ref{newclass}, we obtain our second main result that a certain class of cyclic number fields also have a non-principal Euclidean ideal. The second main result is:

\begin{thm}\label{newclass2}If a number field $K$ is of the form \[\mathbb{Q}\left(\sqrt{q(k+b\sqrt{k})}\right),\] where  $q,\,k\geq 17$ are distinct rational primes satisfying $q,\,k\equiv 1\pmod 4$ and $b>0$ is an integer satisfying $b\equiv 0\pmod 4,$ if $k-b^2>0$ is a perfect square, and if $h_K=2,$ then $K$ has a non-principal Euclidean ideal.
\end{thm}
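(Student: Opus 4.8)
The plan is to exhibit a non-principal integral ideal $C$ and to verify for it the hypotheses of Theorems \ref{growthresult} and \ref{growthresult2}. Since $h_K = 2$, the group $\mathrm{Cl}_K$ is cyclic of order $2$, so \emph{any} non-principal integral ideal $C$ satisfies $\mathrm{Cl}_K = \langle[C]\rangle$; thus it suffices to show that $K$ is a totally real quartic field with $|\mathcal{O}_K^\times| = \infty$, that $\mathcal{O}_K^\times$ contains a multiplicatively independent triple, and that for a suitable residue $u$ modulo $\ell = \mathrm{lcm}(16, f(K))$ every first-degree prime $\mathfrak{p}$ of $\mathcal{O}_K$ with $\mathrm{Nm}(\mathfrak{p}) \equiv u \pmod{\ell}$ lies in the non-trivial ideal class. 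Granting these, Theorem \ref{growthresult2} produces $\gg x/\log^2 x$ first-degree primes $\mathfrak{p}$ of norm at most $x$ with $\mathrm{Nm}(\mathfrak{p}) \equiv u \pmod{\ell}$ and $\langle -1, e_i\rangle \twoheadrightarrow (\mathcal{O}_K/\mathfrak{p})^\times$ for at least one $i$; every such $\mathfrak{p}$ then has $[\mathfrak{p}] = [C]$ and $\mathcal{O}_K^\times \twoheadrightarrow (\mathcal{O}_K/\mathfrak{p})^\times$, so Theorem \ref{growthresult} shows that $C$ is a Euclidean ideal, necessarily non-principal because $h_K > 1$.

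First I would record the structural facts about $K$. Writing $\alpha = q(k + b\sqrt{k})$ and letting $\alpha' = q(k - b\sqrt{k})$ be its conjugate over $\Q(\sqrt k)$, the hypothesis that $k - b^2 = c^2$ is a perfect square gives $\alpha\alpha' = q^2 k(k - b^2) = (qc)^2 k$, so $\sqrt{\alpha}\,\sqrt{\alpha'} = qc\sqrt{k} \in \Q(\sqrt k)$; this is precisely the condition for $K = \Q(\sqrt{\alpha})$ to be a cyclic quartic extension of $\Q$ (rather than biquadratic), with unique quadratic subfield $\Q(\sqrt k)$. Since $q, k > 0$ and $k > b^2$ we have $\alpha, \alpha' > 0$, so $K$ is totally real; hence $K$ has unit rank $3$, $|\mathcal{O}_K^\times| = \infty$, and a multiplicatively independent triple $\{e_1, e_2, e_3\} \subset \mathcal{O}_K^\times$ exists. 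I would then make $f(K)$, and hence $\ell$, explicit using the Gauss-sum formula of Spearman and Williams \cite{spearman1997conductor} for the conductor of a cyclic quartic field; here the roles of $q, k \equiv 1 \pmod 4$ and $b \equiv 0 \pmod 4$ are to control the ramification of $2$ (and to ensure $q, k, c$ are distinct from the relevant small primes).

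The heart of the matter is the class condition, and this is where I would spend the most effort. Because $h_K = 2$ we have $[H : K] = 2$, and I would show, via the genus theory of cyclic quartic fields, that under the stated hypotheses the genus field of $K$ already has degree $2$ over $K$; since the genus field is contained in $H$ and both have degree $2$ over $K$, it then equals $H$. Consequently $H/\Q$ is abelian — concretely $H = K\cdot\Q(\sqrt d)$ for a suitable squarefree $d$ built from some of $q$, $k$, $c$, $-1$, $2$ — and every rational prime ramifying in $H$ either ramifies in $K$ or lies above $2$, so $f(H)$ divides $\ell$. For a first-degree prime $\mathfrak{p}$ lying over an unramified rational prime $p$, Artin reciprocity for the abelian extension $H/\Q$ gives that $\mathfrak{p}$ has first degree exactly when $p$ splits completely in $K$, and that $[\mathfrak{p}]$ is trivial exactly when $p$ splits completely in $H$; hence $[\mathfrak{p}]$ is the non-trivial class precisely when $p$ splits completely in $K$ but not in $H$, a condition on $p \bmod f(H)$ and therefore on $p \bmod \ell$. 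I would take $u$ in the corresponding residue class modulo $\ell$ and then, by the Chinese Remainder Theorem and using that $q, k \equiv 1 \pmod 4$ and that the $2$-part of $\ell$ equals $16$, arrange in addition that $(u, \ell) = 1$ and $\left(\tfrac{u-1}{2}, \ell\right) = 1$, as Theorem \ref{growthresult2} requires.

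I expect the principal obstacle to be this genus-field computation: proving that the genus number of $K$ is exactly $2$ — equivalently that the genus field coincides with $H$, so that $H/\Q$ is abelian — requires a careful analysis of how $q$, $k$, and $2$ ramify in $K$ together with the ambiguous-class-number formula, and it is here that the arithmetic hypotheses ($q, k \geq 17$ prime with $q, k \equiv 1 \pmod 4$, $b \equiv 0 \pmod 4$, and $k - b^2$ a perfect square) are genuinely used. A secondary technical point is verifying that the residue class modulo $f(H)$ forcing $[\mathfrak{p}] = [C]$ is compatible, modulo the $2$-part of $\ell$, with the conditions $(u, \ell) = \left(\tfrac{u-1}{2}, \ell\right) = 1$. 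Once the class condition is established, the rest is routine: feed the primes produced by Theorem \ref{growthresult2} into Theorem \ref{growthresult}, exactly as in the proof of Theorem \ref{newclass}, to conclude that $C$ is a non-principal Euclidean ideal.
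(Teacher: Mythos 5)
Your outline matches the paper's strategy almost exactly: reduce to Theorems \ref{growthresult} and \ref{growthresult2}, use $h_K=2$ so that any non-principal ideal generates $\mathrm{Cl}_K$, get $f(K)$ from Spearman--Williams, identify the Hilbert class field $H$, use that $H/\Q$ is abelian so that ``splits completely in $K$ but not in $H$'' (equivalently, first degree and non-principal) is a congruence condition on $p$ modulo $f(H)\mid\ell$, and then choose $u$ compatible with $(u,\ell)=\bigl(\tfrac{u-1}{2},\ell\bigr)=1$. The one genuinely different ingredient is how you propose to identify $H$: you would compute the genus field via genus theory and the ambiguous class number formula and show it has degree $2$ over $K$, whereas the paper simply writes down the candidate $H=\Q\bigl(\sqrt{q},\sqrt{k+b\sqrt{k}}\bigr)=K(\sqrt{q})$ (so your ``suitable squarefree $d$'' is $d=q$) and verifies by elementary ramification-index bookkeeping that $H/K$ is unramified --- $(q)$ has ramification index $2$ and $(k)$ has ramification index $4$ in both $K/\Q$ and $H/\Q$, the latter via an Eisenstein argument at $(\sqrt{k})$ in $\Q(\sqrt{k})$. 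Your route would work and is more conceptual, but it is also the step you leave unexecuted, and it is really the entire content of the paper's Lemma \ref{hilbert2}; the paper's direct verification is arguably more elementary. Two small corrections to your accounting: since $f(K)=qk$ is odd, $2$ does not ramify in $K$ and plays no role in the genus computation; and the compatibility you flag between the splitting class mod $f(H)$ and the condition $\bigl(\tfrac{u-1}{2},\ell\bigr)=1$ is an issue at $q$ and $k$ (not at the $2$-part, which is handled by the free shift $s\mapsto s+2qk$) --- the paper resolves it not by CRT but by a density count, comparing the density $\tfrac18$ of primes splitting in $K$ but not $H$ against the density $\tfrac{1}{q-1}+\tfrac{1}{k-1}<\tfrac18$ of primes $\equiv 1$ mod $q$ or mod $k$, which is precisely where the hypothesis $q,k\geq 17$ is used.
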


\begin{rmk} The significance of these results is that while Graves introduced one number field with a non-principal Euclidean ideal, we give conditions that provide two new classes of examples of number fields, each of which contains a non-principal Euclidean ideal. In particular, Murty and Graves \cite{graves2013family} provide the only other examples of number fields with a non-principal Euclidean ideal found without assuming the Generalized Riemann Hypothesis. Their results require unit rank at least 4; our results do not require this condition.
\end{rmk}
\begin{rmk}
Using PARI \cite{PARI2}, the author has found hundreds of number fields satisfying the conditions given in Theorems \ref{newclass} and \ref{newclass2}, some of which are given in Tables 1 and 2. The author conjectures that both of these classes of number fields with a non-principal Euclidean ideal are in fact infinite. 
\end{rmk}
\subsection{Acknowledgments} I would like to thank my advisor Ellen Eischen for suggesting this project as well as for providing useful feedback. I would also like to thank Dylan Muckerman for helping me write the Python code used to find explicit examples of number fields with a non-principal Euclidean ideal.

\section{Proof of Main Results}

In order to prove our main results, we would like to apply Theorem \ref{growthresult2} to the number field $K$ in order to obtain a growth result which satisfies the hypotheses of Theorem \ref{growthresult}. We will first prove our result in the biquadratic case and then modify this proof to obtain our second result in the cyclic case.
\subsection{Main Result in the Biquadratic Case} Throughout this section, we will assume that $K$ is a number field satisfying the conditions given in  Theorem \ref{newclass}, i.e., $h_K=2$ and \[K = \Q(\sqrt{q},\sqrt{k\cdot r}),\] where $q,\,k,\,r\geq 29$ are distinct rational primes satisfying $q,\,k,\,r\equiv 1\pmod 4$. 

Before we can prove  Theorem \ref{newclass}, we need the following two lemmas.
\begin{lma}\label{conductor} The conductor $f(K)$ of  $K$ is $qkr$.
\end{lma}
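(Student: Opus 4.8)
The plan is to compute the conductor of $K=\Q(\sqrt q,\sqrt{kr})$ via the conductor-discriminant formula, exploiting the fact that $K/\Q$ is a biquadratic (hence abelian) extension whose character group is $\{\mathbf 1,\chi_1,\chi_2,\chi_3\}$, where the nontrivial characters cut out the three quadratic subfields $\Q(\sqrt q)$, $\Q(\sqrt{kr})$, and $\Q(\sqrt{qkr})$. Since the conductor of $K$ equals $\mathrm{lcm}$ of the conductors of these characters, it suffices to identify the three quadratic subfields and read off their conductors. First I would observe that the three quadratic subfields of $K$ are exactly $\Q(\sqrt q)$, $\Q(\sqrt{kr})$, and $\Q(\sqrt{qkr})$, since the nontrivial classes in $(\Z/2)^2$ correspond to the products of the defining radicands.

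Next I would compute the conductor of each quadratic subfield using the classical formula: for a squarefree integer $d$, the conductor of $\Q(\sqrt d)$ is $|d|$ if $d\equiv 1\pmod 4$ and $4|d|$ otherwise. Here the hypothesis $q,k,r\equiv 1\pmod 4$ does the work: $q\equiv 1\pmod 4$ gives $f(\Q(\sqrt q))=q$; likewise $kr\equiv 1\pmod 4$ (product of two primes each $\equiv 1$) gives $f(\Q(\sqrt{kr}))=kr$; and $qkr\equiv 1\pmod 4$ gives $f(\Q(\sqrt{qkr}))=qkr$. Taking the least common multiple of $q$, $kr$, and $qkr$ — and using that $q,k,r$ are distinct primes, so these are built from distinct prime factors — yields $f(K)=\mathrm{lcm}(q,kr,qkr)=qkr$.

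I do not expect a serious obstacle here; the only point requiring care is making sure the congruence hypotheses are applied to the correct radicands (in particular that $kr$ and $qkr$, not just $k$, $r$, $q$ individually, are $\equiv 1\pmod 4$, which follows since a product of integers each $\equiv 1\pmod 4$ is again $\equiv 1\pmod 4$), so that none of the three subfields contributes a factor of $4$ or $8$ to the conductor. One could alternatively cite the conductor-discriminant formula directly, or invoke the known description of conductors of biquadratic fields; either route gives $f(K)=qkr$ immediately once the three quadratic subfields have been identified.
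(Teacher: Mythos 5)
Your proof is correct, but it reaches the conclusion by a different mechanism than the paper. You invoke the character-theoretic description of the conductor of an abelian field --- $f(K)=\mathrm{lcm}_\chi f(\chi)$ over the character group of $\mathrm{Gal}(K/\Q)\simeq(\Z/2)^2$ --- which requires identifying all three quadratic subfields $\Q(\sqrt{q})$, $\Q(\sqrt{kr})$, and $\Q(\sqrt{qkr})$ and computing each of their conductors via the classical formula for $\Q(\sqrt{d})$. The paper instead sandwiches $f(K)$ directly: from $\Q(\sqrt{q}),\Q(\sqrt{kr})\subseteq K\subseteq\Q(\zeta_{f(K)})$ and minimality of conductors it deduces that $q$ and $kr$ each divide $f(K)$, and from the explicit containment $K\subseteq\Q(\sqrt{q})\Q(\sqrt{kr})\subseteq\Q(\zeta_q)\Q(\zeta_{kr})\subseteq\Q(\zeta_{qkr})$ it gets $f(K)\mid qkr$; these two divisibilities force $f(K)=qkr$ without ever mentioning the third quadratic subfield or the character group. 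Both arguments rest on the same input (conductors of quadratic fields with radicand $\equiv 1\pmod 4$, behavior of cyclotomic fields), so the difference is one of packaging: your route is the more systematic one and generalizes cleanly to any multiquadratic field, while the paper's is slightly more elementary in that it avoids the $\mathrm{lcm}$-of-characters formula and needs only two of the three subfields. Your observation that the products $kr$ and $qkr$ (not merely the individual primes) must be checked to be $\equiv 1\pmod 4$ is exactly the right point of care, and it holds here since a product of integers each $\equiv 1\pmod 4$ is again $\equiv 1\pmod 4$.
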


\begin{proof} Since both $q\equiv 1\pmod 4$ and $k\cdot r\equiv 1 \pmod 4,$ the conductors of $\Q(\sqrt{q}),$ $\Q(\sqrt{k\cdot r})$ are $q$ and $kr,$ respectively. So, since $\Q(\zeta_{f(K)})$ must contain both of these quadratic fields,  the minimality of a conductor implies that \[\Q(\zeta_q),\Q(\zeta_{k r})\subseteq\Q(\zeta_{f(K)}).\] By the theory of cyclotomic fields, both $q$ and $kr$ must divide $f(K),$ and hence, since \[K\subseteq \Q(\sqrt{q})\Q(\sqrt{k\cdot r})\subseteq\Q(\zeta_q)\Q(\zeta_{kr})\subseteq\Q(\zeta_{qk r}),\] we conclude that $f(K)=qk r$.
\end{proof}

\begin{lma}\label{hilbert}
The Hilbert class field of $K$ over $\Q$ is $H = \Q(\sqrt{q},\sqrt{k},\sqrt{ r})$. 
\end{lma}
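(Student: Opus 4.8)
The plan is to identify $H = \Q(\sqrt q,\sqrt k,\sqrt r)$ with the Hilbert class field of $K$ by checking the three defining properties: that $H/K$ is abelian, everywhere unramified, and of degree $h_K = 2$. Since $q,k,r$ are distinct primes, the classes of $q$, $k$, $r$ in $\Q^\times/(\Q^\times)^2$ are independent, so $[H:\Q] = 8$ and $H/\Q$ is multiquadratic, in particular abelian. As $K = \Q(\sqrt q,\sqrt{kr}) \subseteq H$ with $[K:\Q]=4$, this gives $[H:K]=2$ and $H/K$ abelian. Because the Hilbert class field of $K$ has degree $h_K = 2$ over $K$, it will then suffice to show that $H/K$ is unramified at every place, since this forces $H$ to be contained in, hence equal to, the Hilbert class field of $K$.

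For the archimedean places, both $K$ and $H$ are totally real (all of $q,k,r$ are positive), so $H/K$ is unramified at infinity. For the finite places, the hypotheses $q,k,r\equiv 1\pmod 4$ imply that $\Q(\sqrt q)$, $\Q(\sqrt k)$, $\Q(\sqrt r)$ have discriminants $q$, $k$, $r$, so the only rational primes ramifying in $H/\Q$ are $q$, $k$, and $r$; in particular $2$ is unramified in $H/\Q$. Consequently any rational prime $p\notin\{q,k,r\}$ is unramified in $H/\Q$, hence a fortiori in $H/K$.

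It remains to treat $p\in\{q,k,r\}$; by symmetry take $p=q$. In the tower $\Q\subseteq K$, the prime $q$ ramifies in the quadratic subfield $\Q(\sqrt q)$ but is unramified in $\Q(\sqrt{kr})$ (as $q\nmid kr$), so the ramification index of $q$ in $K/\Q$ is exactly $2$. In the tower $\Q\subseteq H$, the prime $q$ ramifies in $\Q(\sqrt q)$ but is unramified in both $\Q(\sqrt k)$ and $\Q(\sqrt r)$, so its ramification index in $H/\Q$ is also exactly $2$. Multiplicativity of ramification indices in $\Q\subseteq K\subseteq H$ then yields $e(\mathfrak{P}/\mathfrak{p})=1$ for any $\mathfrak P\subset\mathcal O_H$ above $\mathfrak p\subset\mathcal O_K$ above $q$, i.e.\ $H/K$ is unramified above $q$, and the identical argument applies at $k$ and $r$. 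Putting these together, $H/K$ is an unramified abelian extension of degree $2 = h_K$, so $H$ is the Hilbert class field of $K$ over $\Q$. The only step requiring genuine care — and essentially the sole obstacle — is the bookkeeping of ramification indices at $q,k,r$ together with the observation that $2$ is unramified; the rest is a formal consequence of $h_K = 2$ and class field theory.
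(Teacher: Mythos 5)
Your proof is correct and follows essentially the same route as the paper: reduce to showing $H/K$ is unramified of degree $2=h_K$, observe that only $q,k,r$ can ramify (you via discriminants of the quadratic subfields, the paper via computing $f(H)=qkr$), and then show each of $q,k,r$ has ramification index exactly $2$ in both $K/\Q$ and $H/\Q$, so that multiplicativity forces $e(\mathfrak{P}/\mathfrak{p})=1$. Your explicit check of the archimedean places is a small addition the paper leaves implicit, but the argument is otherwise the same.
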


\begin{proof} Since $[H:K] = 2$ and $K$ has class number 2 by hypothesis, it is sufficient to show that $H/K$ is an unramified extension. Indeed, since $q,\,k,\, r\equiv 1\pmod 4,$ the conductors of $\Q(\sqrt{q}),$ $\Q(\sqrt{k}),\,\Q(\sqrt{ r})$ are $q,\,k,$ and $r,$ respectively. Then, as in Lemma 1, \[\Q(\zeta_q),\Q(\zeta_{k }),\Q(\zeta_r)\subseteq\Q(\zeta_{f(H)})\] so that $q, k,$ and $r$ each divides $f(H)$.  Hence, since \[H\subseteq\Q(\sqrt{q})\Q(\sqrt{k})\Q(\sqrt{r})\subseteq\Q(\zeta_q)\Q(\zeta_k)\Q(\zeta_r)\subseteq \Q(\zeta_{qkr}),\] we again conclude that $f(H) = qk r$. Therefore, only prime ideals lying over $(q),\,(k),$ or $(r)$  can ramify in $H/K,$ and we now show that the ramification index of each of these prime ideals is 2 in both $H/\Q$ and $K/\Q$. 

Let $F =\Q(\sqrt{k\cdot r}),$ $L = \Q(\sqrt{k},\sqrt{ r}),$ and suppose that $\mathfrak{q},\,\mathfrak{Q},\,\mathfrak{b}$ and $\mathfrak{B}$ lie over $(q)$ in $K/\Q,\,H/\Q,\,F/\Q,$ and $L/\Q,$ respectively. Since $q$ does not divide  $k$ or $r,$ the prime ideal $(q)$ does not ramify in $\Q(\sqrt{k})$ or $\Q(\sqrt{ r}),$ and hence, since $L =\Q(\sqrt{k})\Q(\sqrt{ r}),$ we have $e(\mathfrak{B}/q)=1$. But, since $q$ divides $f(H),$ $(q)$ must ramify in $H/\Q$ so that \[1<e(\mathfrak{Q}/q)=e(\mathfrak{Q}/\mathfrak{B})\cdot e(\mathfrak{B}/q)\leq 2\cdot 1\Rightarrow e(\mathfrak{Q}/q)=2.\] Now, as $F=\Q(\sqrt{k\cdot r})\subset L,$  the prime ideal $(q)$ also does not ramify in $\Q(\sqrt{k\cdot r})$. But again, since $q$ divides $f(K),$ $(q)$ ramifies in $K/\Q$ so that \[1<e(\mathfrak{q}/q)=e(\mathfrak{q}/\mathfrak{b})\cdot e(\mathfrak{b}/q)\leq 2\cdot 1\Rightarrow e(\mathfrak{q}/q)=2.\]  Thus, since \[2=e(\mathfrak{Q}/q) = e(\mathfrak{Q}/\mathfrak{q})\cdot e(\mathfrak{q}/q)=e(\mathfrak{Q}/\mathfrak{q})\cdot 2,\] we conclude that $e(\mathfrak{Q}/\mathfrak{q})=1$ so that $(q)$ is unramified in $H/K$. 

To see that the prime ideal $(k)$ is unramified in $H/K,$ we apply the same argument with $F=\Q(\sqrt{q})$ and $L=\Q(\sqrt{q},\sqrt{r}),$ and a similar modification gives that the prime ideal $(r)$ is also unramified in $H/K$. Hence, since $H/K$ is an unramified extension of degree 2, $H$ is the Hilbert class field of $K$ over $\Q$.
\end{proof}

We are now ready to prove the first main result of this paper:

\begin{proof}[Proof of  Theorem \ref{newclass}] Our goal is to apply  Theorem \ref{growthresult2} to $K=\Q(\sqrt{q},\sqrt{k\cdot r})$ in order to obtain the growth result necessary to apply Theorem \ref{growthresult}. Since $K$ is a totally real number field of degree 4, Dirichlet's unit theorem gives that we can find a set $\{e_1,e_2,e_3\}$ of multiplicatively independent elements in $\mathcal{O}_K^\times$. So, the hypotheses of  Theorem \ref{growthresult2} will be met if, for $\ell = \mathrm{lcm}(16,f(K))=16\cdot qkr,$ we can find some $u\in\Z$ satisfying the following conditions:
\begin{enumerate}[(1)]
\item $(u,\ell) =1;$
\item $\left(\frac{u-1}{2},\ell\right) = 1$.
\end{enumerate}

 To translate the growth result given by  Theorem \ref{growthresult2} into a growth result satisfying the hypotheses of Theorem \ref{growthresult}, we need $u\in\Z$ to satisfy one additional condition:
\begin{enumerate}[(3)]
\item For primes $\mathfrak{P},\mathfrak{p}$ lying over any $(p)$ such that $p\equiv u\pmod{qk r},$ we have that $\mathfrak{P}/p$ has residue degree 2 and $\mathfrak{p}/p$ has residue degree 1.
\end{enumerate}
Note that the last condition ensures that $\mathfrak{P}/\mathfrak{p}$ has residue degree 2 so that by Artin reciprocity, any prime ideal $\mathfrak{p}$ satisfying Condition (3) is non-principal.

In order to find such an element $u\in\Z,$ we first determine a set of conditions equivalent to Condition (2). Indeed, we have \[\begin{split}
u \text{ satisfies Condition } (2)\Leftrightarrow\, &u\not\equiv 1\pmod q,\\& u\not\equiv 1\pmod k,\\& u\not\equiv 1\pmod  r,\\&u\not\equiv 1\pmod 4.
\end{split}\]
So, consider the sets \[\begin{split} A_q &= \{\text{primes }p\in\Z: p=1+nq,\;\;n\in\Z\},\\A_k&=\{\text{primes }p\in\Z: p=1+mk,\;\;m\in\Z\},\\A_r&= \{\text{primes }p\in\Z: p=1+tr,\;\;t\in\Z\}.\end{split}\] By the prime number theorem for arithmetic progressions, the primes in the set $A = A_q\cup A_k\cup A_r$ have density bounded by \[\frac{1}{q-1}+\frac{1}{k-1}+\frac{1}{ r-1},\] and this is strictly less than $\frac{1}{8}$ since $q,k, r\geq 29$ by hypothesis.

Next, for Condition (3), we see that \[\text{primes } \mathfrak{P},\mathfrak{p}\text{ lying over } (p) \text{ satisfy Condition (3)}\Leftrightarrow \left(\frac{p}{K/\Q}\right)= 1,\, \left(\frac{p}{H/\Q}\right)\neq 1.\] By the Chebotarev density theorem, the density of the set \[T_K=\left\{p\in\Z:\left(\frac{p}{K/\Q}\right)= 1\right\}\] is $\frac{1}{4}$ while the density of the set
\[T_H=\left\{p\in\Z:\left(\frac{p}{H/\Q}\right)= 1\right\}\] is $\frac{1}{8}$. So, since any prime in $T_K\backslash T_H$ satisfies Condition (3), the set of primes satisfying Condition (3) is at least $\frac{1}{8}$. Hence, we may choose a prime $s\in T_K\backslash T_H,$ distinct from $2,q,k,$ and $ r,$ that is not contained in $A$. 

It remains to address the condition $u\not\equiv 1 \pmod 4$ in Condition (2): 
\begin{itemize}
\item If $s\not\equiv 1\pmod 4,$ then choose $u = s$. 
\item If $s\equiv 1\pmod 4,$ then choose $u = s+2qkr$. 
\end{itemize}

We now verify our choice of $u$ satisfies the required Conditions (1)-(3):
\begin{enumerate}[(1)]
\item $(u,\ell) = 1$: Clearly, $2,q,k,$ and $r$ are the only primes dividing $\ell=16\cdot qkr$. If $u = s,$ then since $s$ is a prime distinct from $2,q,k$ and $r,$ this condition is satisfied. If $u = s+2qkr,$ then this condition is also satisfied -- if this were not the case, then either $2,q,k$ or $r$ would divide $s,$ a contradiction. 
\item $\left(\frac{u-1}{2},\ell\right) = 1$: By choice, $u\not\equiv 1\pmod 4$ so that $2$ does not divide $\frac{u-1}{2}$. Since $u\equiv s\pmod{qkr},$ where $s\not\equiv 1\pmod q,$ $s\not\equiv 1\pmod k,$ and $s\not\equiv 1\pmod r$ by choice, neither $q,k,$ nor $r$ divides $u-1$ so that this condition is satisfied.
\item As the Artin map factors through $f(H)=f(K) = qkr,$ the choice of $s$ above guarantees that primes $\mathfrak{P},\mathfrak{p}$ lying over any prime ideal $(p)$ such that \[p\equiv u\equiv s\pmod{qkr}\] will satisfy this condition. Note that this step requires that $H/\Q$ is abelian.
\end{enumerate}

Thus, we may apply Theorem \ref{growthresult2} with our choice of $u$ to obtain

\[\left|\left\{\begin{split}\text{prime }&\text{ideals }\mathfrak{p}\\\text{of first}&\text{ degree}\end{split}\,\vline\,\begin{split}\mathrm{Nm}(\mathfrak{p})&\equiv u\pmod {\ell},\\\mathrm{Nm}(\mathfrak{p})\leq x, &\langle-1,e_i\rangle\twoheadrightarrow(\mathcal{O}/\mathfrak{p})^\times\end{split}\right\}\right|\gg\frac{x}{\log^2x}\]
so that since $\mathcal{O}_K^\times\twoheadrightarrow \langle-1,e_i\rangle$ for any $e_i\in\mathcal{O}_K^\times,$ this implies

\[\left|\left\{\begin{split}\text{prime }&\text{ideals }\mathfrak{p}\\\text{of first}&\text{ degree}\end{split}\,\vline\,\begin{split}\mathrm{Nm}(\mathfrak{p})&\equiv u\pmod {\ell},\\\mathrm{Nm}(\mathfrak{p})\leq x, &\mathcal{O}_K^\times\twoheadrightarrow(\mathcal{O}/\mathfrak{p})^\times\end{split}\right\}\right|\gg\frac{x}{\log^2x}.\] By Condition (3), each of these primes is non-principal, and hence, if $\mathrm{Cl}_K = \langle[C]\rangle,$  \[\left|\left\{\begin{split}\text{prime }&\text{ideals }\\\mathfrak{p}\subset&\,\mathcal{O}_K\end{split}\,\vline\,\,\mathrm{Nm}(\mathfrak{p})\leq x,\,[\mathfrak{p}]=[C],\,\mathcal{O}_K^\times\twoheadrightarrow(\mathcal{O}_K/\mathfrak{p})^\times\right\}\right|\gg\frac{x}{\log^2x}.\] Hence, by  Theorem \ref{growthresult}, $C$ is a Euclidean ideal.
\end{proof}

\subsection{Main Result in the Cyclic Case} We will now assume that $K$ is a number field satisfying the conditions given in  Theorem \ref{newclass2}, i.e., $h_K=2$ and \[K=\mathbb{Q}\left(\sqrt{q(k+b\sqrt{k})}\right),\] where  $q,\,k\geq 17$ are distinct rational primes satisfying $q,\,k\equiv 1\pmod 4,$ $b>0$ is an integer satisfying $b\equiv 0\pmod 4,$ and $k-b^2>0$ is a perfect square.

As before, in order to prove  Theorem \ref{newclass2}, we first need two lemmas which are analogous to Lemmas \ref{conductor} and \ref{hilbert} in the biquadratic case.

\begin{lma}\label{conductor2} The conductor $f(K)$ of  $K$ is $qk$.
\end{lma}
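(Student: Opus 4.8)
The plan is to mimic the argument of Lemma \ref{conductor}, but first I must identify the quadratic subfield of $K$ and pin down which rational primes ramify in $K/\Q$. Note that $K=\Q(\sqrt{q(k+b\sqrt k)})$ is a cyclic quartic field: the element $\alpha = q(k+b\sqrt k)$ has $\Q(\sqrt\alpha)\supset\Q(\sqrt k)$ as its unique quadratic subfield, since $\alpha\cdot\bar\alpha = q^2(k^2-b^2k)=q^2k(k-b^2)$, and the hypothesis that $k-b^2$ is a perfect square makes this a square times $k$, so $\mathrm{Nm}_{\Q(\sqrt k)/\Q}(\alpha)$ is $k$ up to squares; this is exactly the standard criterion (see \cite{spearman1997conductor}) for $\Q(\sqrt\alpha)/\Q$ to be cyclic of degree $4$ with quadratic subfield $\Q(\sqrt k)$. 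Because $k\equiv 1\pmod 4$, the conductor of $\Q(\sqrt k)$ is $k$, so $k\mid f(K)$ by the same minimality-of-conductor and cyclotomic-containment argument used in Lemma \ref{conductor}.

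Next I would show that the rational primes ramifying in $K$ are exactly $q$ and $k$. The prime $k$ ramifies because it already ramifies in the subfield $\Q(\sqrt k)$. To see that $q$ ramifies and no other prime does, I would compute (or bound) the relative discriminant of $K/\Q(\sqrt k)$: writing $K=\Q(\sqrt k)(\sqrt\alpha)$ with $\alpha=q(k+b\sqrt k)$, the only primes of $\Q(\sqrt k)$ dividing $\mathfrak d_{K/\Q(\sqrt k)}$ are those dividing $2\alpha$; the condition $b\equiv 0\pmod 4$ (together with $k\equiv 1\pmod 4$) is designed precisely to force $2$ to be unramified, leaving only the primes above $q$ and $k$. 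Combined with the conductor-discriminant formula $f(K)=\prod_\chi f(\chi)$ over the characters $\chi$ cutting out $K$ — for a cyclic quartic these are the trivial character, the quadratic character of $\Q(\sqrt k)$ (conductor $k$), and the two conjugate quartic characters (of equal conductor, say $f_4$, divisible only by $q$ and possibly $k$) — I would conclude $f(K)=k\cdot f_4$ with $f_4$ a power-free product of $q$ and $k$. Since $q\equiv 1\pmod 4$ forces the $q$-part of $f_4$ to be exactly $q$, and checking the $k$-part of $f_4$ is $1$ (as $k\parallel f(K)$, the quartic character must be unramified at $k$ beyond what the quadratic subfield contributes — here one uses $k\equiv 1\pmod 4$ again), we get $f(K)=qk$.

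The main obstacle is the careful local analysis at $2$ and at $k$: one must verify that the specific congruence hypotheses ($q,k\equiv 1\pmod 4$, $b\equiv 0\pmod 4$, $k-b^2$ a perfect square) really do force $2$ to be unramified in $K$ and force the quartic character to have conductor exactly $q$ at odd places. I would handle $2$ by exhibiting an explicit integral element or by Hensel's lemma showing $2$ is unramified in $K/\Q(\sqrt k)$, and handle $k$ by noting that $\alpha = q(k+b\sqrt k) = q\sqrt k(\sqrt k + b)$, so the prime above $k$ in $\Q(\sqrt k)$ divides $\alpha$ exactly once, which (since $k$ is odd) means it ramifies in $K/\Q(\sqrt k)$, accounting for the full $k\parallel f(K)$ coming from the quadratic subfield and nothing more. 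Everything else then follows formally from the conductor-discriminant formula exactly as in Lemma \ref{conductor}.
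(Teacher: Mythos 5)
The paper's own proof of Lemma~\ref{conductor2} is a one-line citation: the hypotheses $q,k\equiv 1\pmod 4$, $b\equiv 0\pmod 4$, $k-b^2$ a positive perfect square put $K$ exactly in the normal form $\Q(\sqrt{a(d+b\sqrt d)})$ treated by Spearman--Williams \cite{spearman1997conductor}, whose explicit formula $f(K)=2^{\ell}|a|d$ with $\ell=0$ (since $d=k\equiv 1\pmod 4$, $b$ even, and $a+b=q+b\equiv 1\pmod 4$) gives $f(K)=qk$ immediately. You instead set out to reprove that formula from scratch via the conductor--discriminant formula and local analysis. That is a legitimate alternative route, and several of your ingredients are correct: the norm computation $\mathrm{Nm}(q(k+b\sqrt k))=q^2c^2k$ correctly identifies $\Q(\sqrt k)$ as the quadratic subfield of a cyclic quartic field, and the valuation argument at $(\sqrt k)$ correctly shows $k$ is totally ramified.

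However, as written your argument has a genuine gap precisely at the step that does all the work. The unramifiedness of $2$ --- the only place where the hypotheses $b\equiv 0\pmod 4$ and $q\equiv 1\pmod 4$ actually enter --- is asserted (``is designed precisely to force $2$ to be unramified,'' ``I would handle $2$ by exhibiting an explicit integral element or by Hensel's lemma'') but never carried out; without it you cannot rule out $f(K)=2^{\ell}qk$ with $\ell\in\{2,3\}$, which would break the later arithmetic with $\ell=\mathrm{lcm}(16,f(K))$. A second, smaller problem: you state the conductor--discriminant formula as $f(K)=\prod_\chi f(\chi)$, but that product computes the \emph{discriminant}; the conductor is $\mathrm{lcm}_\chi f(\chi)$, and since the quartic character $\chi_4$ satisfies $f(\chi_4^2)\mid f(\chi_4)$, the conductor of $K$ is just $f(\chi_4)$ itself, not $k\cdot f_4$ as you write (your bookkeeping happens to land on $qk$, but the intermediate identities are not right). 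You should also note that $q\mid\alpha$ only bounds the ramified set from above; showing $q$ actually ramifies (so that $q\mid f(K)$) needs the parity of $v_{\mathfrak q}(\alpha)$, which requires checking $q\nmid c$ or an equivalent argument. The cleanest fix is the paper's: verify the three congruence conditions of \cite{spearman1997conductor} and quote the formula.
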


\begin{proof} Since $q,k\geq 17$ are distinct rational primes satisfying $q,\,k\equiv 1\pmod 4,$ $b>0$ is an integer satisfying $b\equiv 0\pmod 4,$ and $k-b^2>0$ is a perfect square by hypothesis, we have that $f(K)=qk$ by the main result of \cite{spearman1997conductor}.
\end{proof}

\begin{lma}\label{hilbert2}
The Hilbert class field of $K$ over $\Q$ is \[H = \Q\left(\sqrt{q},\sqrt{k+b\sqrt{k}}\right).\] 
\end{lma}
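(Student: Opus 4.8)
The plan is to mirror the proof of Lemma \ref{hilbert}: show that $H/K$ is an unramified abelian extension of degree $2=h_K$, which by class field theory forces $H$ to be the Hilbert class field of $K$ over $\Q$. First I would record the structural facts. Writing $F:=\Q(\sqrt{k+b\sqrt{k}})$, one has $\sqrt{k}\in F$ (square the generator and solve for $\sqrt{k}$, using $b>0$), and $F/\Q$ is cyclic of degree $4$ because $k(k^2-b^2k)=k^2(k-b^2)$ is a perfect square while $k^2-b^2k=k(k-b^2)$ is not; this is the classical criterion distinguishing a cyclic $\Q(\sqrt{a+b\sqrt d})$ from a biquadratic one. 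Since $\sqrt{q(k+b\sqrt{k})}=\sqrt{q}\cdot\sqrt{k+b\sqrt{k}}$, we get $K\subseteq H=\Q(\sqrt{q})\cdot F$, and since the unique quadratic subfield of $F$ is $\Q(\sqrt{k})\neq\Q(\sqrt{q})$, this compositum has degree $8$; hence $[H:K]=2$ and $\mathrm{Gal}(H/\Q)\cong\Z/2\Z\times\Z/4\Z$. Because $k-b^2>0$ forces $k-b\sqrt{k}=\sqrt{k}(\sqrt{k}-b)>0$, the field $K$ is totally real, and then $H=K(\sqrt{q})$ with $q>0$ is totally real as well, so no infinite place ramifies in $H/K$; it remains to check the finite primes.

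Next I would pin down which primes can ramify. The conductor of $\Q(\sqrt{q})$ is $q$, and by the main result of \cite{spearman1997conductor} applied to $F$ (here $A=1$, $D=k=b^2+(k-b^2)$, with $b\equiv 0\pmod 4$ and $k\equiv 1\pmod 4$ killing the power of $2$) the conductor of $F$ is $k$; hence $f(H)=\mathrm{lcm}(q,k)=qk$, so only the primes above $(q)$ and $(k)$ can ramify in $H/\Q$, a fortiori in $H/K$. For the primes above $(q)$ the argument is exactly as in Lemma \ref{hilbert}: $(q)$ is unramified in $F$ and in $\Q(\sqrt{k})\subset K$, but $q\mid f(H)$ and $q\mid f(K)$, so multiplicativity of ramification indices along $\Q\subset F\subset H$ and along $\Q\subset\Q(\sqrt{k})\subset K$ gives $e(\mathfrak{Q}/q)=e(\mathfrak{q}/q)=2$, whence $e(\mathfrak{Q}/\mathfrak{q})=1$.

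The main obstacle is the prime $(k)$, which, unlike every prime in the biquadratic case, turns out to be \emph{totally} ramified. Here I would argue: $F\subseteq\Q(\zeta_k)$ (it is abelian over $\Q$ with conductor $k$), and $k$ is totally ramified in $\Q(\zeta_k)$, so $e(\mathfrak{b}/k)=4$ in $F$; since $K/\Q$ is cyclic of degree $4$ and $k$ ramifies in its unique quadratic subfield $\Q(\sqrt{k})$, the inertia field of $k$ in $K$ (a subfield of $K$ not containing $\Q(\sqrt{k})$) must be $\Q$, so $e(\mathfrak{q}/k)=4$ in $K$; finally $e(\mathfrak{Q}/k)\geq e(\mathfrak{b}/k)=4$, while $k$ is tamely ramified in $H$, so its inertia group is cyclic and thus has order at most the exponent $4$ of $\mathrm{Gal}(H/\Q)$, giving $e(\mathfrak{Q}/k)=4$. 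Multiplicativity along $\Q\subset K\subset H$ then yields $e(\mathfrak{Q}/\mathfrak{q})=1$. Thus $H/K$ is unramified at every finite prime and at every infinite place, and being abelian of degree $2=h_K$ it is the Hilbert class field of $K$ over $\Q$. (Equivalently, the whole verification can be phrased group-theoretically: writing $\mathrm{Gal}(H/\Q)=\langle\sigma\rangle\times\langle\tau\rangle$ with $|\sigma|=2$, $|\tau|=4$, $\sigma$ nontrivial only on $\Q(\sqrt q)$ and $\tau$ generating $\mathrm{Gal}(F/\Q)$, one checks $\mathrm{Gal}(H/K)=\langle\sigma\tau^2\rangle$, $I_q=\langle\sigma\rangle$, $I_k=\langle\tau\rangle$, and observes $I_q\cap\langle\sigma\tau^2\rangle=I_k\cap\langle\sigma\tau^2\rangle=\{1\}$.)
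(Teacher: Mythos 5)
Your proposal is correct, and the overall skeleton (reduce to showing $H/K$ is unramified of degree $2=h_K$, compute $f(H)=qk$ to isolate the primes $q$ and $k$, then kill ramification in $H/K$ by computing ramification indices in towers) matches the paper's. The treatment of $(q)$ is identical. Where you genuinely diverge is the prime $(k)$, which is the crux of this lemma since, unlike anything in the biquadratic case, it is totally ramified in $K/\Q$. The paper gets $e(\mathfrak{k}/k)=4$ in $K$ by exhibiting $x^2-q(k+b\sqrt{k})$ as an Eisenstein polynomial at the prime $(\sqrt{k})$ of $\Q(\sqrt{k})$, so that $e=2$ on each floor of the tower $\Q\subset\Q(\sqrt{k})\subset K$, and then bounds $e(\mathfrak{K}/k)$ above by $4$ in $H$ by running the tower through $\Q(\sqrt{q})$, where $k$ is unramified. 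You instead get total ramification in $F=\Q(\sqrt{k+b\sqrt{k}})$ from the cyclotomic embedding $F\subseteq\Q(\zeta_k)$, total ramification in $K$ from the fact that the subfields of a cyclic quartic field form a chain (so an inertia field avoiding $\Q(\sqrt{k})$ must be $\Q$), and the upper bound in $H$ from tameness plus cyclicity of the inertia subgroup of $\Z/2\Z\times\Z/4\Z$. Both arguments are sound; yours is more structural and avoids the explicit Eisenstein computation, at the cost of first establishing that $F/\Q$ and $K/\Q$ are cyclic quartic (which you do correctly via the classical $d(a^2-c^2d)$ criterion, and which the paper implicitly assumes anyway when citing Spearman--Williams for the conductor). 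Two points where your write-up is actually more complete than the paper's: you verify that no infinite place ramifies (needed since the Hilbert class field is unramified at all places, and the paper omits this), and your closing group-theoretic reformulation via $I_q\cap\mathrm{Gal}(H/K)=I_k\cap\mathrm{Gal}(H/K)=\{1\}$ is a clean consolidation of the whole verification.
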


\begin{proof} Since $[H:K] = 2$ and $K$ has class number 2 by hypothesis, it is sufficient to show that $H/K$ is an unramified extension. Indeed, since $q\equiv 1\pmod 4,$ the conductor of $\Q(\sqrt{q})$ is $q,$ and since $k\equiv 1\pmod 4$ and $b\equiv 0\pmod 4,$ the main result of \cite{spearman1997conductor} gives that the conductor of $\Q(\sqrt{k+b\sqrt{k}})$ is $k$. Then, as in Lemma \ref{hilbert}, \[\Q(\zeta_q),\Q(\zeta_{k})\subseteq\Q(\zeta_{f(H)})\] so that $q$ and $k$ each divides $f(H)$.  Hence, since \[H\subseteq\Q(\sqrt{q})\Q\left(\sqrt{k+b\sqrt{k}}\right)\subseteq\Q(\zeta_q)\Q(\zeta_k)\subseteq \Q(\zeta_{qk}),\] we conclude that $f(H) = qk$. Therefore, only prime ideals lying over $(q)$ or $(k)$ can ramify in $H/K,$ and we now show that the ramification indices of the prime ideals $(q)$ and $(k)$ are 2 and 4, respectively, in both $H/\Q$ and $K/\Q$. 

We first show that the prime ideal $(q)$ has ramification index 2 in both $H/\Q$ and $K/\Q$. Let $F = \Q(\sqrt{k})$ and $L =\Q(\sqrt{k+b\sqrt{k}}),$ and suppose that $\mathfrak{q},\,\mathfrak{Q},\,\mathfrak{b}$ and $\mathfrak{B}$ lie over $(q)$ in $K/\Q,\,H/\Q,\,F/\Q,$ and $L/\Q,$ respectively. Since $q$ does not divide  $f(L)=k,$ the prime ideal $(q)$ does not ramify in $L$ so that $e(\mathfrak{B}/q)=1$. But, since $q$ divides $f(H),$ $(q)$ must ramify in $H/\Q$ so that \[1<e(\mathfrak{Q}/q)=e(\mathfrak{Q}/\mathfrak{B})\cdot e(\mathfrak{B}/q)\leq 2\cdot 1\Rightarrow e(\mathfrak{Q}/q)=2.\] Now, as $F=\Q(\sqrt{k})\subset K$ also has conductor $k,$  the prime ideal $(q)$ also does not ramify in $\Q(\sqrt{k})$. But again, since $q$ divides $f(K),$ $(q)$ ramifies in $K/\Q$ so that \[1<e(\mathfrak{q}/q)=e(\mathfrak{q}/\mathfrak{b})\cdot e(\mathfrak{b}/q)\leq 2\cdot 1\Rightarrow e(\mathfrak{q}/q)=2.\]  Thus, since \[2=e(\mathfrak{Q}/q) = e(\mathfrak{Q}/\mathfrak{q})\cdot e(\mathfrak{q}/q)=e(\mathfrak{Q}/\mathfrak{q})\cdot 2,\] we conclude that $e(\mathfrak{Q}/\mathfrak{q})=1$ so that $(q)$ is unramified in $H/K$. 

It remains to show that the prime ideal $(k)$ has ramification index 4 in both $H/\Q$ and $K/\Q$. In addition to the fields defined above, let $F' = \Q(\sqrt{q}),$ and suppose that $\mathfrak{k},\,\mathfrak{K},\,\mathfrak{c}$ and $\mathfrak{c'}$ lie over $(k)$ in $K/\Q,\,H/\Q,\,F/\Q$ and $F'/\Q,$ respectively. Since $k$ divides $f(F)=k,$ we see that $e(\mathfrak{c}/k)=2$. Moreover, we have $K=\Q(\alpha),$  where \[\alpha = \sqrt{q(k+b\sqrt{k})}\] has minimal polynomial $f(x)=x^2-q(k+b\sqrt{k})$ over $F$. Since $f(x)\in\mathcal{O}_F[x]$ is an Eisenstein polynomial at the prime ideal $(\sqrt{k}),$ we see that $e(\mathfrak{k}/\mathfrak{c})=2,$ and hence,  \[e(\mathfrak{k}/k) = e(\mathfrak{k}/\mathfrak{c})\cdot e(\mathfrak{c}/k)=2\cdot 2=4.\]
Lastly, since $k$ does not divide  $f(F')=q,$ the prime ideal $(k)$ does not ramify in $F'$ so that $e(\mathfrak{c'}/k)=1$. But, since $K\subset H,$ we see that $e(\mathfrak{k}/k) =4$ implies \[4\leq e(\mathfrak{K}/k)=e(\mathfrak{K}/\mathfrak{c'})\cdot e(\mathfrak{c'}/k)\leq 4\cdot 1\Rightarrow e(\mathfrak{K}/k)=4.\] 
Thus, since \[4=e(\mathfrak{K}/k) = e(\mathfrak{K}/\mathfrak{k})\cdot e(\mathfrak{k}/k)=e(\mathfrak{K}/\mathfrak{k})\cdot 4,\] we conclude that $e(\mathfrak{K}/\mathfrak{k})=1$ so that $(k)$ is unramified in $H/K$.
Hence, since $H/K$ is an unramified extension of degree 2,  $H$ is the Hilbert class field of $K$ over $\Q$.
\end{proof}

Once we have these two key lemmas, the proof of Theorem \ref{newclass2} follows exactly as in the proof of Theorem \ref{newclass}. In particular, recall that to satisfy Condition (3) in this proof, the Hilbert class field  of $K$ over $\Q$  must be abelian over $\Q;$ Lemma \ref{hilbert2} implies that $\mathrm{Gal}(H/\Q)\simeq \Z/2\Z\times\Z/4\Z$. Also, note that since $f(K)$ has only two prime divisors in the cyclic case,  the density conditions which guarantee the existence of $s\in\Z$ in the proof of Theorem \ref{newclass} will be met as long as $q,\,k\geq 17$.

\section{Examples}
We now present several new examples of number fields which satisfy the hypotheses of  Theorems \ref{newclass} or \ref{newclass2} and therefore have a Euclidean ideal. Note that the lists of examples presented in Tables 1 and 2 are not exhaustive -- due to space limitations, the author included only a small selection of examples in each case. 

As the conditions in Theorems \ref{newclass} and \ref{newclass2} force the discriminant of $K$ in both cases to be rather large,  these results do not apply to any of the number fields given in the table \cite{RoblotTables} of totally real quartic number fields with class number 2 and discriminant $\leq 600,000$. However, using PARI \cite{PARI2}, we can construct new examples of number fields to which Theorems \ref{newclass} and \ref{newclass2} do apply.\footnote{The code used here can be found at \url{http://blogs.uoregon.edu/catherinehsu/research}.} 

\subsection{Examples in the Biquadratic Case}
We first list all biquadratic number fields of the form $K=\Q(\sqrt{q},\sqrt{k\cdot r}),$ with $29\leq q\leq 41$ and $29\leq k,r\leq 100,$ which satisfy the hypotheses of Theorem \ref{newclass}. When $h_K=2,$ we give explicit minimal polynomials $f(y),\,g(x)$ of $\alpha,\,\beta,$ respectively, where $H = \Q(\sqrt{q},\sqrt{k},\sqrt{r}) = \Q(\alpha)$ and $K =\Q(\beta)$. Indeed, we have \[\begin{split}
f(y)&=\left[\left(y^2-S_1\right)^2-4S_2\right]^2-64S_3y^2,\\g(x)&=\left[x^2-(q+kr)\right]^2-4qkr,
\end{split}\] 
for symmetric polynomials $S_1= q+k+r,\,S_2 = qk+qr+kr,$ and $S_3 = qkr$.

With this information, we can use PARI \cite{PARI2} to find the integers $(s,u)$ required to apply Theorems \ref{growthresult} and \ref{growthresult2} as in the proof of Theorem \ref{newclass} above.

\begin{rmk}
Note that a straightforward application of the algorithm given by Lezowski \cite{lezowski2012examples} determines whether the fields in Table 1 have a norm-Euclidean ideal.
\end{rmk}

\begin{rmk} When $h_K> 2$ in Table 1, we should be able to modify the hypotheses of Theorem \ref{newclass} in order to obtain similar results. However, it is not clear how to modify Condition (3) in the proof of Theorem \ref{newclass} - this modification is necessary because when $h_K>2,$ the ideal $\mathfrak{p}$ being non-principal no longer implies that $[\mathfrak{p}]=[C]$.
\end{rmk}
\LTcapwidth=\textwidth
{\footnotesize
\begin{longtabu}{X[.4,c,m]X[.25,c,m]X[2,c,m]X[.5,c,p]}
	\caption{New examples of non-principal biquadratic number fields with a Euclidean ideal}\\\tabucline[1.5pt]{-}
	$(q,k,r)$ & $h_K$ & Respective minimal polynomials $f(y),\,g(x)$ of $\alpha,\,\beta$ such that $H=\Q(\alpha)$ and $K=\mathbb{Q}(\beta)$ & $(s,u)$\TT\B\\\tabucline[1pt]{-}
	\endfirsthead
	\multicolumn{4}{l}%
	{\tablename\ \thetable\ -- \textit{Continued from previous page}}\\\tabucline[1.5pt]{-}
	$(q,k,r)$& $h_K$ & Respective minimal polynomials $f(y),\,g(x)$ of $\alpha,\,\beta$ such that $H=\Q(\alpha)$ and $K=\mathbb{Q}(\beta)$ & $(s,u)$\TT\B\\\tabucline[1pt]{-}
	\endhead\tabucline[1pt]{-}
	\multicolumn{4}{r}{\textit{Continued on next page}}\ \
	\endfoot\tabucline[1.5pt]{-}
	\endlastfoot
$(29, 37, 41)$&$2$&$y^8-428y^6+38462y^4-1246076y^2+13446889$&$(13,87999)$\T\\&&$x^4-3092x^2+2214144$&\\$(29, 37, 53)$&$16$& &\T\\$(29, 37, 61)$&$2$&$y^8-508y^6+55982y^4-2021356y^2+18207289$&$(23,23)$\T\\&&$x^4-4572x^2+4963984$&\\$(29, 37, 73)$&$2$&$y^8-556y^6+68798y^4-2653948y^2+18003049$&$(5,156663)$\T\\&&$x^4-5460x^2+7139584$&\\$(29, 37, 89)$&$2$&$y^8-620y^6+88574y^4-3778748y^2+14160169$&$(13,191007)$\T\\&&$x^4-6644x^2+10653696$&\\$(29, 37, 97)$&$4$& &\T\\$(29, 41, 53)$&$2$&$y^8-492y^6+51582y^4-1835324y^2+19954089$&$(67,67)$\T\\&&$x^4-4404x^2+4596736$&\\$(29, 41, 61)$&$4$& &\T\\$(29, 41, 73)$&$6$& &\T\\$(29, 41, 89)$&$4$& &\T\\$(29, 41, 97)$&$2$&$y^8-668y^6+103502y^4-4691276y^2+16216729$&$(7,7)$\T\\&&$x^4-8012x^2+15586704$&\\$(29, 53, 61)$&$2$&$y^8-572y^6+70382y^4-2736044y^2+32569849$&$(23,23)$\T\\&&$x^4-6524x^2+10265616$&\\$(29, 53, 73)$&$2$&$y^8-620y^6+83966y^4-3419324y^2+36808489$&$(5,224407)$\T\\&&$x^4-7796x^2+14745600$&\\$(29, 53, 89)$&$4$& &\T\\$(29, 53, 97)$&$4$& &\T\\$(29, 61, 73)$&$2$&$y^8-652y^6+92702y^4-3839644y^2+46063369$&$(7,7)$\T\\&&$x^4-8964x^2+19571776$&\\$(29, 61, 89)$&$2$&$y^8-716y^6+114014y^4-5010524y^2+50055625$&$(7,7)$\T\\&&$x^4-10916x^2+29160000$&\\$(29, 61, 97)$&$2$&$y^8-748y^6+125822y^4-5725756y^2+49378729$&$(7,7)$\T\\&&$x^4-11892x^2+34668544$&\\$(29, 73, 89)$&$12$& &\T\\$(29, 73, 97)$&$2$&$y^8-796y^6+141518y^4-6421708y^2+71284249$&$(5,410703)$\T\\&&$x^4-14220x^2+49730704$&\\$(29, 89, 97)$&$10$& &\T\\$(37, 29, 41)$&$2$&$y^8-428y^6+38462y^4-1246076y^2+13446889$&$(3,3)$\T\\&&$x^4-2452x^2+1327104$&\\$(37, 29, 53)$&$16$& &\T\\$(37, 29, 61)$&$2$&$y^8-508y^6+55982y^4-2021356y^2+18207289$&$(11,11)$\T\\&&$x^4-3612x^2+2999824$&\\$(37, 29, 73)$&$2$&$y^8-556y^6+68798y^4-2653948y^2+18003049$&$(11,11)$\T\\&&$x^4-4308x^2+4326400$&\\$(37, 29, 89)$&$2$&$y^8-620y^6+88574y^4-3778748y^2+14160169$&$(3,3)$\T\\&&$x^4-5236x^2+6471936$&\\$(37, 29, 97)$&$4$& &\T\\$(37, 41, 53)$&$4$& &\T\\$(37, 41, 61)$&$4$& &\T\\$(37, 41, 73)$&$48$& &\T\\$(37, 41, 89)$&$2$&$y^8-668y^6+99662y^4-4668236y^2+35366809$&$(3,3)$\T\\&&$x^4-7372x^2+13046544$&\\$(37, 41, 97)$&$6$& &\T\\$(37, 53, 61)$&$2$&$y^8-604y^6+77198y^4-3425932y^2+49042009$&$(67,67)$\T\\&&$x^4-6540x^2+10214416$&\\$(37, 53, 73)$&$4$& &\T\\$(37, 53, 89)$&$4$& &\T\\$(37, 53, 97)$&$4$& &\T\\$(37, 61, 73)$&$4$& &\T\\$(37, 61, 89)$&$2$&$y^8-748y^6+121982y^4-6163516y^2+80048809$&$(7,7)$\T\\&&$x^4-10932x^2+29073664$&\\$(37, 61, 97)$&$2$&$y^8-780y^6+134046y^4-6970396y^2+81486729$&$(7,7)$\T\\&&$x^4-11908x^2+34574400$&\\$(37, 73, 89)$&$8$& &\T\\$(37, 73, 97)$&$4$& &\T\\$(37, 89, 97)$&$2$&$y^8-892y^6+174254y^4-9443692y^2+152053561$&$(7,7)$\T\\&&$x^4-17340x^2+73891216$&\\$(41, 29, 37)$&$2$&$y^8-428y^6+38462y^4-1246076y^2+13446889$&$(31,31)$\T\\&&$x^4-2228x^2+1065024$&\\$(41, 29, 53)$&$2$&$y^8-492y^6+51582y^4-1835324y^2+19954089$&$(31,31)$\T\\&&$x^4-3156x^2+2238016$&\\$(41, 29, 61)$&$4$& &\T\\$(41, 29, 73)$&$2$&$y^8-572y^6+72302y^4-2839724y^2+22534009$&$(31,31)$\T\\&&$x^4-4316x^2+4309776$&\\$(41, 29, 89)$&$4$& &\T\\$(41, 29, 97)$&$2$&$y^8-668y^6+103502y^4-4691276y^2+16216729$&$(37,230703)$\T\\&&$x^4-5708x^2+7683984$&\\$(41, 37, 53)$&$4$& &\T\\$(41, 37, 61)$&$4$& &\T\\$(41, 37, 73)$&$16$& &\T\\$(41, 37, 89)$&$2$&$y^8-668y^6+99662y^4-4668236y^2+35366809$&$(23,23)$\T\\&&$x^4-6668x^2+10575504$&\\$(41, 37, 97)$&$6$& &\T\\$(41, 53, 61)$&$4$& &\T\\$(41, 53, 73)$&$2$&$y^8-668y^6+95054y^4-4640588y^2+68079001$&$(5,317263)$\T\\&&$x^4-7820x^2+14653584$&\\$(41, 53, 89)$&$6$& &\T\\$(41, 53, 97)$&$2$&$y^8-764y^6+128558y^4-6856172y^2+75394489$&$(5,421567)$\T\\&&$x^4-10364x^2+26010000$&\\$(41, 61, 73)$&$8$& &\T\\$(41, 61, 89)$&$4$& &\T\\$(41, 61, 97)$&$4$& &\T\\$(41, 73, 89)$&$8$& &\T\\$(41, 73, 97)$&$4$& &\T\\$(41, 89, 97)$&$2$&$y^8-908y^6+179102y^4-10388636y^2+182439049$&$(23,23)$\T\\&&$x^4-17348x^2+73822464$&\\\end{longtabu}}

\subsection{Examples in the Cyclic Case}
Now we list all cyclic number fields \[K=\mathbb{Q}\left(\sqrt{q(k+b\sqrt{k})}\right),\] with $17\leq q\leq 41,$ $41\leq k\leq 337,$ and $4\leq b\leq 16,$ which satisfy the hypotheses of Theorem \ref{newclass2}. Again, when $h_K=2,$ we give explicit minimal polynomials $f(y),\,g(x)$ of $\alpha,\,\beta,$ respectively, where $H = \Q(\sqrt{q},\sqrt{k+b\sqrt{k}}) = \Q(\alpha)$ and $K =\Q(\beta)$. Indeed, we have \[\begin{split}
f(y)&=\left[\left(y^2-(q+\alpha_+)\right)^2-4q(\alpha_+)\right]^2\left[\left(y^2-(q+\alpha_-)\right)^2-4q(\alpha_-)\right]^2,\\g(x)&=x^4-2qkx^2+q^2k(k-b^2),
\end{split}\] for $\alpha_+ = k+b\sqrt{k}$ and $\alpha_- =k-b\sqrt{k}$.

With this information, we can use PARI \cite{PARI2} to find the integers $(s,u)$ required to apply Theorems \ref{growthresult} and \ref{growthresult2} as in the proof of Theorem \ref{newclass} above.

\LTcapwidth=\textwidth
{\footnotesize
\begin{longtabu}{X[.4,c,m]X[.25,c,m]X[2,c,m]X[.5,c,p]}
	\caption{New examples of non-principal cyclic number fields with a Euclidean ideal}\\\tabucline[1.5pt]{-}
	$(q,k,b)$ & $h_K$ & Respective minimal polynomials $f(y),\,g(x)$ of $\alpha,\,\beta$ such that $H=\Q(\alpha)$ and $K=\mathbb{Q}(\beta)$ & $(s,u)$\TT\B\\\tabucline[1pt]{-}
	\endfirsthead
	\multicolumn{4}{l}%
	{\tablename\ \thetable\ -- \textit{Continued from previous page}}\\\tabucline[1.5pt]{-}
	$(q,k,b)$& $h_K$ & Respective minimal polynomials $f(y),\,g(x)$ of $\alpha,\,\beta$ such that $H=\Q(\alpha)$ and $K=\mathbb{Q}(\beta)$ & $(s,u)$\TT\B\\\tabucline[1pt]{-}
	\endhead\tabucline[1pt]{-}
	\multicolumn{4}{r}{\textit{Continued on next page}}\ \
	\endfoot\tabucline[1.5pt]{-}
	\endlastfoot
$(17, 41, 4)$&$2$&$y^8-232y^6+13296y^4-159872y^2+6400$&$(5,1399)$\T\\&&$x^4-1394x^2+296225$&\\$(17, 97, 4)$&$2$&$y^8-456y^6+61680y^4-2632832y^2+23503104$&$(3,3)$\T\\&&$x^4-3298x^2+2270673$&\\$(17, 73, 8)$&$2$&$y^8-360y^6+29328y^4-717824y^2+2359296$&$(3,3)$\T\\&&$x^4-2482x^2+189873$&\\$(17, 89, 8)$&$26$& &\T\\$(17, 113, 8)$&$2$&$y^8-520y^6+71568y^4-2998784y^2+3936256$&$(11,11)$\T\\&&$x^4-3842x^2+1600193$&\\$(17, 193, 12)$&$2$&$y^8-840y^6+182768y^4-10233984y^2+10137856$&$(23,23)$\T\\&&$x^4-6562x^2+2733073$&\\$(17, 257, 16)$&$6$& &\T\\$(17, 281, 16)$&$4$& &\T\\$(17, 337, 16)$&$2$&$y^8-1416y^6+533520y^4-46303232y^2+260112384$&$(3,3)$\T\\&&$x^4-11458x^2+7888833$&\\$(29, 17, 4)$&$2$&$y^8-184y^6+8208y^4-102656y^2+16384$&$(19,19)$\T\\&&$x^4-986x^2+14297$&\\$(29, 41, 4)$&$2$&$y^8-280y^6+18576y^4-161024y^2+262144$&$(43,43)$\T\\&&$x^4-2378x^2+862025$&\\$(29, 97, 4)$&$2$&$y^8-504y^6+69648y^4-2268416y^2+9437184$&$(3,3)$\T\\&&$x^4-5626x^2+6607737$&\\$(29, 73, 8)$&$2$&$y^8-408y^6+36144y^4-1051520y^2+7485696$&$(3,3)$\T\\&&$x^4-4234x^2+552537$&\\$(29, 89, 8)$&$2$&$y^8-472y^6+51504y^4-1653632y^2+4393216$&$(17,5179)$\T\\&&$x^4-5162x^2+1871225$&\\$(29, 113, 8)$&$2$&$y^8-568y^6+80304y^4-3255680y^2+30976$&$(11,11)$\T\\&&$x^4-6554x^2+4656617$&\\$(29, 193, 12)$&$2$&$y^8-888y^6+195344y^4-12099840y^2+802816$&$(31,31)$\T\\&&$x^4-11194x^2+7953337$&\\$(29, 257, 16)$&$6$& &\T\\$(29, 281, 16)$&$4$& &\T\\$(29, 337, 16)$&$10$& &\T\\$(37, 17, 4)$&$10$& &\T\\$(37, 41, 4)$&$2$&$y^8-312y^6+23056y^4-188672y^2+409600$&$(5,3039)$\T\\&&$x^4-3034x^2+1403225$&\\$(37, 97, 4)$&$2$&$y^8-536y^6+75920y^4-2016512y^2+4194304$&$(31,31)$\T\\&&$x^4-7178x^2+10756233$&\\$(37, 73, 8)$&$2$&$y^8-440y^6+41648y^4-1280384y^2+11397376$&$(19,19)$\T\\&&$x^4-5402x^2+899433$&\\$(37, 89, 8)$&$2$&$y^8-504y^6+57520y^4-1864064y^2+8952064$&$(5,6591)$\T\\&&$x^4-6586x^2+3046025$&\\$(37, 113, 8)$&$2$&$y^8-600y^6+87088y^4-3407744y^2+2119936$&$(13,8375)$\T\\&&$x^4-8362x^2+7580153$&\\$(37, 193, 12)$&$26$& &\T\\$(37, 257, 16)$&$6$& &\T\\$(37, 281, 16)$&$2$&$y^8-1272y^6+379696y^4-26813312y^2+153760000$&$(5,20799)$\T\\&&$x^4-20794x^2+9617225$&\\$(37, 337, 16)$&$2$&$y^8-1496y^6+566960y^4-56650112y^2+13897984$&$(43,43)$\T\\&&$x^4-24938x^2+37369593$&\\$(41, 17, 4)$&$2$&$y^8-232y^6+14064y^4-248960y^2+92416$&$(19,19)$\T\\&&$x^4-1394x^2+28577$&\\$(41, 97, 4)$&$2$&$y^8-552y^6+79344y^4-1892480y^2+2509056$&$(3,3)$\T\\&&$x^4-7954x^2+13207617$&\\$(41, 73, 8)$&$2$&$y^8-456y^6+44688y^4-1401344y^2+13307904$&$(3,3)$\T\\&&$x^4-5986x^2+1104417$&\\$(41, 89, 8)$&$10$& &\T\\$(41, 113, 8)$&$2$&$y^8-616y^6+90768y^4-3482624y^2+4194304$&$(11,11)$\T\\&&$x^4-9266x^2+9307697$&\\$(41, 193, 12)$&$2$&$y^8-936y^6+209648y^4-13843584y^2+21977344$&$(67,67)$\T\\&&$x^4-15826x^2+15897217$&\\$(41, 257, 16)$&$6$& &\T\\$(41, 281, 16)$&$2$&$y^8-1288y^6+386064y^4-28725248y^2+205520896$&$(7,7)$\T\\&&$x^4-23042x^2+11809025$&\\$(41, 337, 16)$&$2$&$y^8-1512y^6+574224y^4-58626560y^2+1806336$&$(3,3)$\T\\&&$x^4-27634x^2+45886257$&\\\end{longtabu}}

\bibliographystyle{plain}
\bibliography{bib}{}
\end{document}